\newtheorem{theorem}{Theorem}
\newtheorem{lemma}{Lemma}
\newtheorem{proposition}{Proposition}
\newtheorem{remark}{Remark}
\newcommand{\Z}{\mathbb Z}
\newcommand{\Q}{\mathbb Q}
\newcommand{\inft}{
\begin{tikzpicture}[scale=0.03528, baseline={([yshift=-.5ex]current bounding box.center)}]
\draw (0,-13) arc [radius=9, start angle=-45, end angle=45];
\draw (10,0) arc [radius=9, start angle=135, end angle=225];
\end{tikzpicture}}
\newcommand{\zerot}{
\begin{tikzpicture}[scale=0.03528,baseline={([yshift=-.5ex]current bounding box.center)}]
\draw (0,-4) arc [radius=9, start angle=135, end angle=45];
\draw (12,6) arc [radius=9, start angle=-45, end angle=-135];
\end{tikzpicture}}
\begin{document}

\markboth{Tuzun and Sikora}
{Jones Unknot Conjecture}



\title{Verification Of The Jones Unknot Conjecture Up To 22 Crossings}
\author{Robert E. Tuzun, Adam S. Sikora}


\address{Department of Mathematics, University at Buffalo, Buffalo, NY  14260 \\
retuzun@buffalo.edu and asikora@buffalo.edu}

\maketitle

\begin{abstract}
We proved by computer enumeration that the Jones polynomial distinguishes the unknot for knots up to 22 crossings.  Following an approach of Yamada, we generated knot diagrams by inserting algebraic tangles into Conway polyhedra, computed their Jones polynomials by a divide-and-conquer method, and tested those with trivial Jones polynomials for unknottedness with the computer program SnapPy. We employed numerous novel strategies for reducing the computation time per knot diagram and the number of knot diagrams to be considered. That made computations up to 21 crossings possible on a single processor desktop computer. We explain these strategies in this paper. We also provide total numbers of algebraic tangles up to 18 crossings and of Conway polyhedra up to 22 vertices.
We encountered new unknot diagrams with no crossing-reducing pass moves in our search.  We report one such diagram in this paper.
\end{abstract}

\keywords{Jones unknot conjecture, Jones polynomial, Kauffman bracket, algebraic tangle, Conway polyhedron}

\subjclass[2010]{Mathematics Subject Classification 2000: 57M25, 57M27}

\section{Introduction}

One of the most prominent conjectures in knot theory is that of V. Jones asserting that the Jones polynomial distinguishes the unknot. Jones proposed it as one of the challenges for mathematics in the 21st century in \cite{Jo}.  Only limited cases of that conjecture have been verified so far:
\begin{itemize}
\item By \cite{Ka, Mu, Th1}, the span of the Jones polynomial of an alternating knot is its minimal crossing number. Consequently, the conjecture holds for alternating knots. More generally, it holds for all adequate knots, see e.g. \cite{AK}.
\item Hoste, Thistlethwaite, and Weeks \cite{HTW} tabulated all prime knots up to 16 crossings in the late 90s, showing no counter-examples to Jones' conjecture among them.
\item Dasbach and Hougardy \cite{DH} verified the conjecture through a computer search up to 17 crossings.
\item Yamada \cite{Ya} performed a computer-based verification of the Jones conjecture for knots up to 18 crossings and for algebraic knots up to 21 crossings. M. Thistlethwaite informs us that he tested the conjecture for all knots up to 21 crossings, \cite{Th3}.
\item Khovanov homology is a bi-graded homology theory for knots which refines (categorifies) the Jones polynomial. Kronheimer and Mrowka proved that these homology groups detect the unknot, \cite{KM}.
\end{itemize}

It is also worth pointing out that Bigelow \cite{Bi} related the Jones conjecture for knots of braid index $4$ to the question of the faithfulness of the Burau representation on the braid group $B_4$ on $4$ strands. Ito proved that non-faithfulness of the Burau representation on $B_4$ would indeed disprove the Jones conjecture, \cite{It}.
Additionally,  \cite{APR, JR, KR, Pr2, Ro} made attempts to disprove the Jones conjecture by considering generalized mutations on unknot diagrams, which preserve the Jones polynomial but potentially change their knot type.

Interestingly, the Jones conjecture does not generalize to links, as Thistlethwaite
\cite{Th2} found examples of 15-crossing two-component links with the
Jones polynomial of the unlink of 2 components. Later, Eliahou, Kauffman, and Thistlethwaite \cite{EKT} found infinite families of such links.

We prove

\begin{theorem}
The Jones polynomial distinguishes all non-trivial
knots up to 22 crossings from the unknot.
\end{theorem}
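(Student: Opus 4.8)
The plan is to verify the conjecture by exhaustive computer enumeration, relying on the fact that every knot admits a diagram obtained by inserting algebraic tangles into the vertices of a Conway polyhedron. Following Yamada's approach, I would first enumerate all Conway polyhedra (basic polyhedra) up to $22$ vertices, since the number of vertices bounds the crossing number of any diagram built upon them. Into each vertex of each polyhedron I would substitute algebraic tangles, and I would separately enumerate all algebraic tangles by crossing number up to the relevant bound. The key observation making this finite and tractable is that any knot with at most $22$ crossings appears as such a polyhedron-with-tangles diagram, so checking all these diagrams checks all such knots.

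\smallskip

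For each generated diagram I would compute its Jones polynomial, or equivalently its Kauffman bracket, using a divide-and-conquer scheme: the bracket of the full diagram is assembled from the brackets of the constituent tangles via the multiplicative and additive behavior of tangle operations, so one never expands the full $2^n$ state sum directly. Any diagram whose Jones polynomial equals that of the unknot is flagged as a candidate counterexample. Finally, each flagged candidate is tested for unknottedness using \texttt{SnapPy} (for instance via hyperbolic structure computation or direct simplification), and if every such candidate is confirmed to be the unknot, the conjecture holds up to $22$ crossings. Since $22$ vertices in a Conway polyhedron forces certain diagrams to in fact be reducible or to represent knots of fewer crossings, care is needed to ensure the enumeration genuinely covers all $22$-crossing knots.

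\smallskip

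The main obstacle will be the sheer size of the computation: the number of candidate diagrams grows very rapidly with crossing number, so a naive enumeration is infeasible on a single processor. The heart of the work is therefore the collection of reduction strategies, both for cutting the per-diagram Jones polynomial computation time and for pruning the space of diagrams that must be examined at all. Concretely, I would exploit symmetries of the polyhedra, discard diagrams that are obviously equivalent to already-examined ones or that manifestly represent split or composite links, and organize the tangle-insertion recursion so that shared subtangle computations are reused. The correctness of the final claim then rests on two independent verifications: that the enumeration is provably complete (no $22$-crossing knot is missed), and that every diagram with trivial Jones polynomial is rigorously shown to be unknotted rather than merely suspected to be.
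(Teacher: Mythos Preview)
Your proposal is correct and follows essentially the same approach as the paper: enumeration via Conway polyhedra with inserted algebraic tangles (plus algebraic knots separately), divide-and-conquer Kauffman bracket computation, and \texttt{SnapPy} verification of the trivial-Jones candidates. The paper's concrete reduction strategies---the $t=-1$ determinant pretest, restriction to algebraically trivializable tangles, discarding thin polyhedra, working only up to mutation and abstract isomorphism, and elimination via crossing-reducing pass moves---are precisely the sort of pruning you anticipate but do not name explicitly.
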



Obtaining this result required testing 2,257,956,747,340 non-algebraic knot
diagrams and 16,043,635,711 algebraic knot diagrams, for a total of
2,274,000,383,051 knot diagrams.

We achieved the above result through a three step approach, similar to that of Yamada \cite{Ya}:
\begin{enumerate}
\item Generation of appropriate knot diagrams up to 22 crossings, by (a) inserting
algebraic tangles into Conway polyhedra and by (b) considering closures of
algebraic tangles. Not all knot diagrams are necessary for the purpose of testing
of the Jones conjecture. We discuss these details in Sec. \ref{s_enumeration}.
\item Computation of the determinants of all diagrams and computation of the Kauffman bracket polynomials of the diagrams with determinant $1$, using a divide-and-conquer method,  see Sec. \ref{s_KB}.
\item Testing the diagrams with Kauffman brackets of the form $\pm A^{r}$ for possible crossing reducing pass moves first and then for unknottedness using the computer program SnapPy \cite{Sn}, see Sec. \ref{s_pass}-\ref{s_reco}.
\end{enumerate}

In all of these steps, we employed several novel enhancements for reducing the computation time and memory requirements resulting in reasonable computation times on a single processor desktop machine for computations up to 21 crossings, and on a computer cluster for 22 crossing computations.

Of the $2.27\cdot 10^{12}$ knot diagrams generated only 0.14\% had determinant $1$ and only a fraction $7.7 \cdot 10^{-7}$ had trivial Jones polynomial.

We observed that the computational effort for different parts of the Jones conjecture verification rises by a factor of between 5 and 10 in CPU time for every increment in number of crossings considered. Further details of the computational aspect of the project including a breakdown of the timing are presented in Sec. \ref{s_computation}.

We plan to verify the conjecture for 23+ crossing diagrams by further optimization and by parallelization of our algorithms.

\section*{Acknowledgments}

We would like to thank Heinz Kredel for his extensive help with the JAS algebra software used in this work, Nathan Dunfield for his help with SnapPy, and  Andrey Zabolotskiy for bringing \cite{BrMcK} to our attention. We would also like to thank the Center for Computational Research at the University at Buffalo for providing access to their computer cluster on which the parallel computations were performed.



\section{Algebraic Tangles and Their Kauffman brackets}
\label{s_enumeration}
An {\em $n$-tangle} in a $3$-ball $B^3$ with distinguished $2n$ points $b_1,...,b_{2n}$ is a proper embedding of a $1$-manifold $L$ into $B^3$ such that $L\cap \partial B^3=\{b_1,...,b_{2n}\}$.
Tangles are considered up to isotopy in $B^3$ fixing $\partial B^3.$
We will refer to $2$-tangles simply as {\em tangles} and denote their endpoints by NW, NE, SE, and SW, following \cite{Co}.



We will use the standard coordinate system of $\mathbb R^3$ with $X,Y,Z$ axes pointing to the right, up, and towards the reader, respectively.
The $180^{o}$ rotations $s_X,s_Y,s_Z$ of tangles with respect to $X,$ $Y$, and $Z$ axes are called {\em tangle mutations}.

We will use the operations of tangle addition, multiplication, and reflection.  {\em The reflection of $T$,}
$r(T)$, is obtained by reflection of $T$
about the NW-SE axis, sending $(x,y,z)$ to $(-y,-x,z)$.  The {\em sum of tangles,} $T_{1}+T_{2},$ is
obtained by joining the NE and SE corners of $T_{1}$ with
the NW and SW corners of $T_{2}$, respectively.

\begin{figure}
\center{\includegraphics[width=0.6in]{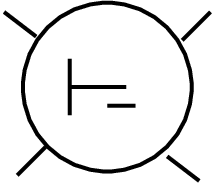}\quad
\includegraphics[width=1.2in]{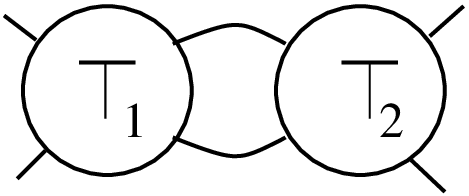}\quad
\includegraphics[width=0.9in]{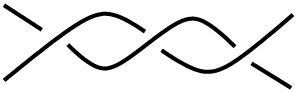}}
\caption{$ r(T_1), T_1+ T_2$, integral tangle $3$}
\label{tangle-fig}
\end{figure}


An {\em $n\in \Z$ (integral) tangle} is composed of $|n|$ half-twists, which are positive if $n>0$ and negative, otherwise, see Fig. \ref{tangle-fig}.
 (A twist is positive if its overstrand has a positive slope.)

Any tangle obtained from the zero tangle $\zerot$ by a sequence of reflections and additions of integer tangles is a {\em rational tangle,}  \cite{Co}.

Conway proved that if one associates the multiplicative inverse ($r\to 1/r$) with tangle reflections, then the value in $\hat \Q=\Q\cup \{\infty\}$ obtained in the process of building a rational tangle depends on the isotopy type of the tangle only, see \cite{Ad, Co, KL1}. Note that the reflection of \cite{KL1} differs from that of Conway by a mutation. However, \cite{KL1} prove that mutation preserves rational tangles.

Tangles formed by reflection and addition of rational tangles are
called {\em algebraic}. A closure of an algebraic tangle is an {\em algebraic} or {\em arborescent link}.

The following summarizes some known properties of algebraic tangles:

\begin{lemma}
If  $T$ is algebraic then so are:\\
(a) its mirror image, $\overline T$ obtained by reversing all crossings $T$\\
(b) mutants $s_X(T), s_Y(T),s_Z(T)$ of $T$.\\
(c) clockwise and counterclockwise $90^o$ rotation of $T$.
\end{lemma}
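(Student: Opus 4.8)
The plan is to establish each part by exhibiting the claimed operation as a composition of the three operations that generate algebraic tangles from rational tangles—namely reflection, addition, and (implicitly) the rational-tangle constructions themselves—so that the image of an algebraic tangle remains algebraic. The key observation is that the class of algebraic tangles is, by definition, the smallest class containing the rational tangles and closed under reflection and addition. Thus it suffices to check that each listed symmetry (a) commutes with these two generating operations and (b) preserves the class of rational tangles. Part (b) for rational tangles is exactly the content already recorded in the excerpt: the reflection $r$ is one of the generators, mutation preserves rational tangles by \cite{KL1}, and taking mirror images of a rational tangle plainly yields a rational tangle (negate the defining continued-fraction value).

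For part (a), I would argue that the mirror image operation $\overline{\,\cdot\,}$ distributes over tangle sum, $\overline{T_1+T_2}=\overline{T_1}+\overline{T_2}$, and commutes with reflection, $\overline{r(T)}=r(\overline{T})$, since reversing every crossing is a local operation unaffected by how tangles are glued or reflected about the NW--SE axis. Since the mirror image of a rational tangle is rational, an induction on the number of reflections and additions used to build $T$ shows $\overline{T}$ is algebraic. Part (b) follows the same template: each rotation $s_X,s_Y,s_Z$ is an isometry of $B^3$, hence commutes with tangle sum up to relabeling the glued endpoints and with reflection, so once one checks that each $s_\bullet$ sends rational tangles to rational tangles—which is the mutation-invariance of rational tangles cited from \cite{KL1}—the same inductive argument applies.

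For part (c), I would observe that a $90^\circ$ rotation can be written as a composition of a reflection with a mutation: reflecting about the NW--SE axis and then applying an appropriate $s_\bullet$ realizes a quarter-turn of the tangle box, up to isotopy fixing the boundary sphere. Since both reflection (part of the defining data) and mutation (part (b)) preserve the algebraic class, their composition does as well, giving both the clockwise and counterclockwise rotations.

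The main obstacle is bookkeeping rather than depth: the generating operations are defined with respect to the fixed NW, NE, SE, SW labeling, so each symmetry permutes these distinguished boundary points, and one must verify that after relabeling, the identities $\overline{T_1+T_2}=\overline{T_1}+\overline{T_2}$, $s_\bullet(T_1+T_2)=s_\bullet(T_1)+s_\bullet(T_2)$ (with the sum possibly replaced by a reflected sum), and their reflection analogues hold on the nose up to boundary-fixing isotopy. Care is needed because a rotation may turn a horizontal sum into a vertical one, so the ``commutes with addition'' step is really ``sends an addition to an operation still expressible via reflection and addition.'' Once these commutation relations are pinned down, every part reduces to the single inductive principle that a class closed under the generators is preserved by any operation commuting with the generators and preserving the base case of rational tangles.
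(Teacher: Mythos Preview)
Your approach is essentially the same as the paper's: induct on the construction of the algebraic tangle, using commutation identities between the symmetries $\overline{\,\cdot\,}$, $s_X$, $s_Y$, $s_Z$ and the two generating operations (addition and reflection $r$). The paper writes the key identities out explicitly---e.g.\ $s_X(T_1+T_2)=s_X(T_1)+s_X(T_2)$ and $s_Z(T_1+T_2)=s_Z(T_2)+s_Z(T_1)$---and handles $s_Y$ via $s_Y=s_Xs_Z$ rather than directly, whereas you leave these as ``bookkeeping'' to be filled in; but the shape of the argument is the same. (One minor difference: the paper's base case is integer tangles, where the mutation-invariance is obvious, so the appeal to \cite{KL1} for rational tangles is unnecessary.)

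There is, however, a concrete slip in your part (c). You assert that a $90^{\circ}$ rotation is realized by ``reflecting about the NW--SE axis and then applying an appropriate $s_\bullet$.'' This is not true: $r$ followed by any of $s_X,s_Y,s_Z$ is either orientation-reversing on $B^3$ (for instance $s_X\circ r:(x,y,z)\mapsto(-y,x,-z)$, which is a quarter-turn composed with $z\mapsto -z$) or is another planar reflection ($s_Z\circ r$), never a pure $90^{\circ}$ rotation. The paper's decomposition is $r$ followed by $s_X$ followed by the \emph{mirror image}; the mirror image is what undoes the $z\mapsto -z$. Since you have already established (a), the fix is immediate---just insert the mirror image into your composition---but as written your claimed decomposition in (c) is incorrect.
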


\begin{proof}
(a) $\overline T$ can be built from integer tangles the same way as $T$ by reversing signs of all integer tangles involved.\\
(b) Note that
$$s_X(T_1 + T_2)=s_X(T_1)+ s_X(T_2)\quad \text{and}\quad s_X(r(T))=s_Y(T)$$
$$s_Z(T_1 + T_2)=s_Z(T_2)+ s_Z(T_1)\quad \text{and}\quad s_Z(r(T))=r(s_Z(T_2)$$ and, finally,
$$s_Y=s_Xs_Z.$$
Now the statements follow by induction on the complexity of the tangle.\\
(c)
$90^o$ rotation results from the reflection $r$ followed by $s_X$, followed by the mirror image. All of these operations transform algebraic tangles to algebraic tangles.
\end{proof}

\section{Kauffman brackets}
Recall that the Kauffman bracket of framed links is defined recursively
by the following rules:
\begin{equation}
\langle O \rangle = 1
\end{equation}
\begin{equation}
\begin{tikzpicture}[scale=0.03528,baseline={([yshift=-.5ex]current bounding box.center)}]
\node at (-6,6) {$\langle$};
\draw (0,0) -- (12,12);
\draw (12,0) -- (8,4);
\draw (4,8) -- (0,12);
\node at (18,6) {$\rangle$};
\end{tikzpicture}
=A \langle \, \inft\, \rangle+A^{-1} \langle \, \zerot\, \rangle,
\end{equation}
\begin{equation}\label{trivial-loop}
\langle L \cup O \rangle = (-A^{2} - A^{-2}) \langle L \rangle
\end{equation}
where $L \cup O$ is any link with a trivial component $O.$

The Jones polynomial of a knot $K$ is a Laurent polynomial in one
variable, $t$, with integer coefficients, given by
\begin{equation}
J(D) = (-A^{-3})^{\omega(D)} \langle D \rangle |_{A=t^{-1/4}}
\end{equation}
where $\omega(D)\in \Z$ is the writhe of the knot diagram $D$.


Recall that \hspace*{-.05in}\zerot, and \inft are the zero and infinity tangles, respectively.
By resolving all crossings of a tangle $T$ by the above skein relations and eliminating all trivial components by (\ref{trivial-loop}), we arrive at
$$\langle T\rangle=a(A)\ \inft+b(A)\ \zerot$$
and call the pair of Laurent polynomials $(a(A), b(A))$ {\em the Kauffman bracket of $T$}.

Note that tangle mutations $s_X,s_Y,s_Z$ preserve Kauffman bracket while reflection $r(T)$ and the mirror image $\overline{T}$ transform $\langle T \rangle = (a,b)$ to
\begin{equation}\label{e-rot}
\langle r(T) \rangle = (\overline{b}, \overline{a}),\quad
\langle \overline{T} \rangle = (\overline{a}, \overline{b})
\end{equation}
where $\overline{a}$ and $\overline{b}$ denote $a(A^{-1})$ and $b(A^{-1})$
respectively.

The sum of tangles $T_{i}$ with
Kauffman brackets $(a_{i},b_{i})$, for $i = 1, 2$, has Kauffman bracket
\begin{equation}
\langle T_{1}+T_{2} \rangle =
(a_{1}b_{2} + b_{1}a_{2} - (A^{2}+A^{-2})a_{1}a_{2}, b_{1}b_{2})
\end{equation}

Let $t = A^{-4}$. The following lemma simplifies the storage of Kauffman brackets.

\begin{lemma}\label{KB-tangle}
For any tangle $T$, $\langle T\rangle=A^{n} (t^{-1/2} p, q)$ for some $n \in \Z$ and
some $p, q \in \mathbb{Z}[t^{\pm 1}]$.
\end{lemma}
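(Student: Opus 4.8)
The plan is to recast the statement as a grading condition and prove it by the Kauffman state sum. Since $t=A^{-4}$ gives $t^{-1/2}=A^{2}$, writing $\langle T\rangle=A^{n}(t^{-1/2}p,q)$ is the same as asserting that every power of $A$ occurring in the first entry $a(A)$ lies in a single residue class modulo $4$, that every power of $A$ in the second entry $b(A)$ lies in a single residue class modulo $4$, and that these two classes differ by $2$. Equivalently, $\langle T\rangle$ is homogeneous for the $\Z/4$-grading on $\Z[A^{\pm 1}]\langle\,\inft,\zerot\,\rangle$ in which $\deg A=1$, $\deg\zerot=0$, and $\deg\inft=2$: an element $a\,\inft+b\,\zerot$ is homogeneous of degree $d$ exactly when $a$ is supported in class $d-2$ and $b$ in class $d$, which is the desired form with $n\equiv d$.

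First I would establish this homogeneity by expanding $\langle T\rangle$ as a sum over Kauffman states $s$, a state being a choice of the $\inft$- or $\zerot$-smoothing at each crossing. A state $s$ contributes $A^{\sigma(s)}(-A^{2}-A^{-2})^{|s|}$ times $\inft$ or $\zerot$, where $\sigma(s)$ is the total power of $A$ produced by the smoothing choices (the signed smoothing count) and $|s|$ is the number of closed loops formed; call $s$ an $\inft$-state or a $\zerot$-state accordingly. Because $-A^{2}-A^{-2}$ is supported in residue class $2$, every monomial coming from $s$ has $A$-exponent congruent to $\sigma(s)+2|s|$ modulo $4$. It therefore suffices to show that $\sigma(s)+2|s|\bmod 4$ depends only on the type of $s$ and differs by $2$ between the two types.

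The heart of the argument, and the step I expect to be the main obstacle, is the following local dichotomy for two states differing at a single crossing: flipping that one smoothing changes the loop count $|s|$ by exactly $1$ when the resulting type is unchanged, and leaves $|s|$ unchanged when the type flips. This is where planarity enters, and I would prove it by deleting a neighborhood of the crossing and classifying how the four arc-ends there are joined by the rest of the (now crossingless) diagram---either in pairs to one another or out to the tangle boundary---checking by hand the resulting configurations. Granting this, flipping a smoothing always changes $\sigma$ by $\pm 2$, hence changes $\sigma(s)+2|s|$ by $2+2\equiv 0$ when the type is preserved and by $2+0\equiv 2$ when it flips. Since the states form a connected hypercube, a path between any two states changes $\sigma(s)+2|s|\bmod 4$ by twice the number of type-flips along it; as this parity records whether the two types agree, $\sigma(s)+2|s|\bmod 4$ is constant on each type and the two values differ by $2$, which is exactly what was needed.

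Finally, I note a shortcut that sidesteps the local dichotomy for the algebraic tangles actually occurring here: one can instead induct on the construction of $T$. The integer-tangle base cases have the asserted form, and both the reflection identity $\langle r(T)\rangle=(\overline{b},\overline{a})$ and the addition formula visibly preserve it. A direct substitution $a_{i}=A^{n_{i}+2}p_{i}$, $b_{i}=A^{n_{i}}q_{i}$ shows that $\langle T_{1}+T_{2}\rangle$ has the form with $n=n_{1}+n_{2}$, and that $\langle r(T)\rangle$ has it with $n$ replaced by $-n-2$, giving the lemma for algebraic tangles without appealing to the state count.
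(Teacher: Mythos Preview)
Your proposal is correct and follows essentially the same argument as the paper: expand into Kauffman states, define the degree of a state as $\sigma(s)+2|s|\bmod 4$, and analyze a single smoothing flip via the dichotomy that a type-preserving flip changes the loop count by one while a type-changing flip does not, then use connectedness of the state cube. Your grading reformulation and explicit mention of where planarity enters make the write-up a bit cleaner, and your closing inductive shortcut for algebraic tangles is a pleasant addition not present in the paper, but the core proof is the same.
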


\begin{proof}
If the diagram of $T$ has $n$ crossings then $\langle T\rangle$ is a sum of $2^n$ states, each of the form $A^c (-A^2-A^{-2})^l$\ \inft\hspace*{.1in}
or $A^c (-A^2-A^{-2})^l$\ \zerot\ for some $c\in \Z,$ $l\in \Z_{\geq 0}$. These states will be called infinity and zero states, respectively. We say that these are states of degree $c+2l$ mod $4$, since all exponents of $A$ in them have that value.

One can go from one state to another through crossing resolution changes, which may preserve the state type (zero or infinity) or change it. If it preserves the type,  then the change is between two states like in Fig. \ref{cross-change}(a) and the degrees of the two states involved coincide mod 4. (That is a consequence of a change $A \leftrightarrow A^{-1}$ related to the smoothing change and from a creation/elimination of the loop factor, $-A^2-A^{-2}$).  If a crossing resolution change affects the state type, then it is of the form (b) and it changes the state degrees by $2$ mod $4$. (Note that in this case the smoothing change cannot create or eliminate any loop.)
\begin{figure}
\centerline{\includegraphics[width=2.5in]{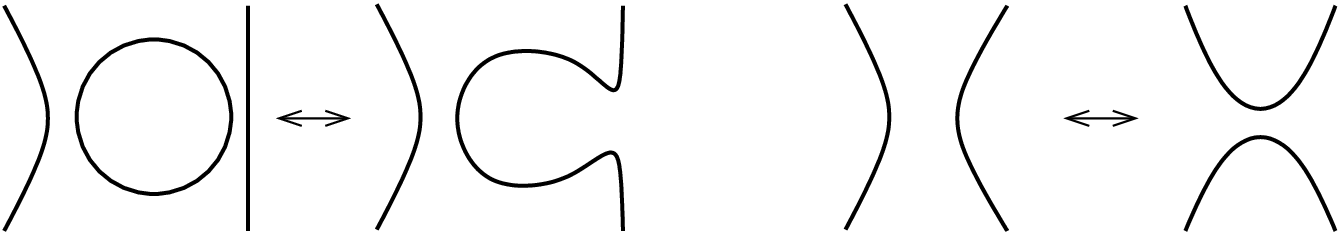}}
\caption{The effect of crossing smoothing change on states: (a) preserving type, (b) changing type}
\label{cross-change}
\end{figure}
Since one can go from one state to any other state through crossing resolution changes, we conclude that\\
(a) any two states of $\langle T\rangle$ of the same type are of the same degree mod $4$.\\
(b) any two states of $\langle T\rangle$ of different types are of degrees differing by $2$ mod $4$.\\
That proves the statement.
\end{proof}


We say that a tangle $T$, $\langle T\rangle =(a,b)$ is {\em algebraically trivializable} iff the ideal
$I(a,b)\triangleleft \Z[A^{\pm 1}]$ generated by $a,b$ equals $\Z[A^{\pm 1}].$ It is easy to see that this condition can be tested by
\begin{itemize}
\item computing the Gr\"obner basis of $I(A^n\cdot a,A^n\cdot b)$ in $\Z[A]$, where $n$ is the smallest exponent such that $A^n\cdot a,A^n\cdot b\in \Z[A]$, and then
\item  testing $A^n$ for a membership in that ideal.
\end{itemize}
In our program, we have used H. Kredel's JAS Java computer algebra system, which allows for computation of Gr\"obner bases over integers, \cite{Kr}.

The next statement shows that for testing the Jones conjecture it suffices to consider knot diagrams built of algebraically trivializable tangles only.

\begin{proposition}
If $T$ is a subtangle of a knot diagram with the trivial Jones polynomial, then $T$ is algebraically trivializable.
\end{proposition}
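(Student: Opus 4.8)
The plan is to exploit the linearity of the Kauffman bracket under substitution of a tangle's two resolutions into the ambient diagram. Write $\langle T\rangle=(a,b)$, that is, $\langle T\rangle=a\,\inft+b\,\zerot$ in the skein module of the tangle ball on its four endpoints. Let $D$ be the given knot diagram containing $T$ as a subtangle, and let $D_\infty$ and $D_0$ denote the link diagrams obtained from $D$ by replacing $T$ with the $\inft$ and $\zerot$ tangles, respectively; since $\inft$ and $\zerot$ share the four endpoints of $T$, both are honest link diagrams glued into the common complement of $T$ in $D$.

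The first step I would carry out is to establish the substitution identity
$$\langle D\rangle=a\,\langle D_\infty\rangle+b\,\langle D_0\rangle.$$
This follows because resolving every crossing inside $T$ and absorbing trivial loops produces exactly $a\,\inft+b\,\zerot$ in the tangle slot, and gluing the complementary tangle onto the two basis elements $\inft,\zerot$ is $\Z[A^{\pm 1}]$-linear, sending $\inft\mapsto\langle D_\infty\rangle$ and $\zerot\mapsto\langle D_0\rangle$. Put $\alpha=\langle D_\infty\rangle$ and $\beta=\langle D_0\rangle$; both lie in $\Z[A^{\pm 1}]$, since the Kauffman bracket of any link diagram is a Laurent polynomial in $A$. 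Next I would invoke the hypothesis: triviality of the Jones polynomial means $J(D)=1$, and from $J(D)=(-A^{-3})^{\omega(D)}\langle D\rangle$ we get $\langle D\rangle=(-A^{-3})^{-\omega(D)}=\pm A^{r}$ with $r=3\omega(D)$. Substituting into the identity yields $a\alpha+b\beta=\pm A^{r}$, which exhibits the unit $\pm A^{r}$ as an element of the ideal $I(a,b)\triangleleft\Z[A^{\pm 1}]$. Multiplying by the unit $\pm A^{-r}$ then gives $1\in I(a,b)$, so $I(a,b)=\Z[A^{\pm 1}]$ and $T$ is algebraically trivializable.

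The argument has no genuinely hard step; the whole force comes from the observation that a unit in $\Z[A^{\pm 1}]$ generates the entire ring. The one point deserving care is the substitution identity for $\langle D\rangle$, which I would justify by appealing to the freeness of the tangle skein module on the basis $\{\inft,\zerot\}$ together with the $\Z[A^{\pm 1}]$-linearity of closing off with the complementary tangle; once that identity is in hand, the conclusion is immediate. Note also that the same reasoning applies verbatim whether $D_\infty$ and $D_0$ are knots or multi-component links, since all we ever use about $\alpha$ and $\beta$ is that they are Laurent polynomials in $A$.
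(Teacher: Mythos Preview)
Your proof is correct and is essentially the same as the paper's: the paper writes the complement of $T$ as a tangle $T'$ with $\langle T'\rangle=(a',b')$ and computes $\langle D\rangle=(aa'+bb')(-A^2-A^{-2})+ab'+ba'$, which is exactly your identity $\langle D\rangle=a\langle D_\infty\rangle+b\langle D_0\rangle$ once one expands $\langle D_\infty\rangle=-a'(A^2+A^{-2})+b'$ and $\langle D_0\rangle=a'-b'(A^2+A^{-2})$. Both then conclude by observing that $\langle D\rangle$ is a unit in $\Z[A^{\pm 1}]$, forcing $I(a,b)=\Z[A^{\pm 1}]$.
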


\begin{proof}
Suppose that $T'$ is the complement of $T$ in the knot diagram $D$, as in Fig. \ref{T-T'} with the trivial Jones polynomial. If
$T=(a,b),$ $T'=(a',b')$.
\begin{figure}
\centerline{\includegraphics[width=0.9in]{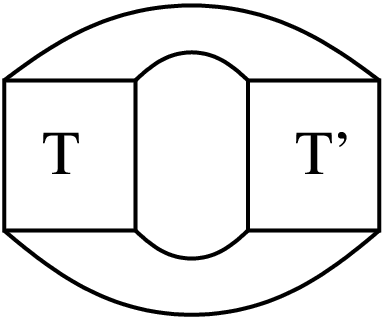}}
\caption{\ }
\label{T-T'}
\end{figure}
then the Kauffman bracket of the knot diagram is
$$\langle D\rangle=(aa'+bb')(-A^2-A^{-2})+ ab'+ba'=(-A)^{-3w(D)},$$
Therefore, $A^{3w(D)}\in I(a,b)$ and, hence, $T$ is algebraically trivializable.
\end{proof}

\section{Algebraic tangles}
Since we considered algebraic tangles as building blocks of knot diagrams with the trivial Jones polynomial, we
\begin{itemize}
\item discarded all algebraic tangles with internal loops.
\item considered algebraically trivializable tangles only.
\item considered tangles up to mutation only, because tangle mutations preserve knottedness of knots containing them, \cite{Ro}, and preserve Kauffman brackets.
\item generated algebraic tangles up to reflection and mirror image only, to
save time and storage.
\end{itemize}

Let $T_{p/q}$ be the rational tangle associated with $p/q.$
Then
$$r(T_{p/q})=T_{q/p}\quad \text{and}\quad T_{p/q}=\bar T_{-p/q}.$$
Therefore, in particular, we generated rational tangles $T_{p/q}$ for $p/q>1$ only.
It is known that they can be built of positive integral tangles, following the continuing fraction decomposition of $p/q.$

The numbers of algebraic tangle found are provided in the table below. The most intensive part of computations was checking algebraic trivializability of tangles.  For 17 crossing tangles that check took four weeks of computer time, and for 18 crossings, 32 weeks. Our results disagree with those of Yamada, \cite{Ya}. (We found more tangles).

\begin{table}[ht]
\caption{Number of algebraic tangles
(total and trivializable).}
{\begin{tabular}{|rrr|rrr|}
\hline
$n$ & Total & Triv & $n$ & Total & Triv \\
\hline
1  &         1 &       1 & 10 &      4334 &     2589 \\
2  &         1 &       1 & 11 &     15076 &     7754 \\
3  &         2 &       2 & 12 &     53648 &    23572 \\
4  &         4 &       4 & 13 &    193029 &    71124 \\
5  &        12 &      12 & 14 &    698590 &   211562 \\
6  &        36 &      30 & 15 &   2560119 &   633059 \\
7  &       113 &      94 & 16 &   9422500 &  1866458 \\
8  &       374 &     288 & 17 &  34935283 &  5478404 \\
9  &      1242 &     836 & 18 & 130250565 & 15674910 \\ \hline
\end{tabular}}
\label{algtang_table}
\end{table}

\section{Conway polyhedra}

A Conway polyhedron is a planar 4-valent graph, with no loops and no bigons
(and in particular, no bigon bounding the infinite region), see Fig. \ref{polyh} (center and left).
As observed by Conway \cite{Co}, every knot is either algebraic or
composed of algebraic tangles embedded in a Conway polyhedron, see examples in Fig. \ref{polyh}.  We say that a Conway polyhedron is {\em thin} if it can be disconnected by removing two of its edges. To optimize enumeration of knot diagrams necessary for verification of the Jones conjecture, non-thin Conway polyhedra and algebraic tangles were enumerated first and then reused repeatedly
during the main calculations.
This polyhedra enumeration was performed up to (abstract) isomorphism only, which is sufficient for our purposes by the following result:

\begin{proposition}
For the purpose of the verification of the Jones conjecture up to $n$ crossings it is sufficient to consider
(1) non-thin polyhedra only, and
(2) polyhedra up to an abstract isomorphism only.
\end{proposition}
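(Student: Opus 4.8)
The plan is to treat the two reductions separately, beginning with (1). I would first stress that being \emph{thin} is a property of the diagram, not merely of the abstract graph: a $2$-edge cut of the polyhedron $P$ is realized in the projection sphere by a simple closed curve $\gamma$ meeting $P$ transversally in exactly two points, away from the vertices. After inserting algebraic tangles into the vertices of $P$, the curve $\gamma$ still meets the resulting knot diagram $D$ in exactly those two points (each lying on a connecting arc, not at a crossing), and therefore exhibits $D$ as a connected sum $D = D_1 \# D_2$, with the understanding that a trivial summand arises when one side of $\gamma$ carries an unknotted arc. Each $D_i$ is built from algebraic tangles inserted into one component of $P \setminus \gamma$, a $4$-valent planar graph with strictly fewer vertices than $P$; hence each $D_i$ is either algebraic or supported on a smaller polyhedron.

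The next step is to push this decomposition through the conjecture. Since the Jones polynomial is multiplicative under connected sum, $J(D) = J(D_1)\,J(D_2)$. If $J(D)=1$, then each factor is a unit of $\Z[t^{\pm 1}]$, so $J(D_i) = \pm t^{k_i}$; evaluating at $t=1$, where $V_K(1)=1$ for every knot $K$, forces the sign to be $+$, and the vanishing $V_K'(1)=0$ for knots forces $k_i=0$. Thus $J(D_1)=J(D_2)=1$, so $D$ is a counterexample exactly when one of its summands is. Arguing by induction on the crossing number, every potential counterexample carried by a thin polyhedron already appears as an algebraic knot or on a non-thin polyhedron, which proves (1).

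For (2) I would restrict to non-thin polyhedra and compare two planar realizations of a single abstract isomorphism class. By Whitney's theorem on planar embeddings, any two planar embeddings of the same abstract $4$-valent graph are related by a finite sequence of sphere reflections together with Whitney flips at separation pairs. After the polyhedron is filled with algebraic tangles, a reflection replaces every knot by its mirror image, while a flip at a $2$-vertex cut rotates the subtangle lying between the two cut vertices by $180^{\circ}$, i.e. applies one of the mutations $s_X, s_Y, s_Z$ to a subtangle. By the facts recalled earlier, mutations preserve both the Kauffman bracket and the knot type, and passage to the mirror image is already built into our enumeration, since we generate tangles up to mirror image and, by Lemma~1, the class of algebraic tangles is closed under mirror image and mutation. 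Hence the collection of knots to be tested is unchanged when abstractly isomorphic polyhedra are identified, and one representative per class suffices.

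The main obstacle I anticipate is careful bookkeeping rather than any single deep fact. For (1) one must check rigorously that a $2$-edge cut of $P$ yields a genuine $2$-point separating curve in \emph{every} filled diagram, so that an honest connected-sum decomposition, and not a degenerate configuration, occurs; one must also confirm the elementary identities $V_K(1)=1$ and $V_K'(1)=0$ used to exclude a non-constant monomial Jones polynomial. For (2) the delicate point is matching Whitney's purely combinatorial flips with the geometric mutations $s_X, s_Y, s_Z$: one has to verify that each flip at a separation pair genuinely realizes a $180^{\circ}$ rotation of a subtangle and never alters the knot type. This is precisely where the non-thinness hypothesis enters, guaranteeing enough connectivity that the only remaining embedding ambiguities are these flips and reflections, both of which our enumeration already absorbs.
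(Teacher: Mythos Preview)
Your overall strategy matches the paper's: part~(1) via multiplicativity of $J$ under connected sum, and part~(2) via Whitney's $2$-isomorphism theorem, which is exactly what the paper's reference \cite{ChG} packages as ``abstractly isomorphic $2$-connected polyhedra are related by mutations.'' Your treatment of~(1) is in fact slightly more self-contained than the paper's: where the paper cites \cite{Gan} for the step ``monomial Jones polynomial implies $J=1$,'' you supply a direct argument from $V_K(1)=1$ and $V_K'(1)=0$, both of which are standard consequences of the skein relation.

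There is, however, a real error in your part~(2). You assert that ``mutations preserve both the Kauffman bracket and the knot type.'' The second clause is false: Conway mutation can change the knot --- the Conway knot and the Kinoshita--Terasaka knot are mutants but are distinct. What is true, and what the paper actually invokes, is that mutation preserves \emph{unknottedness}; combined with preservation of the Kauffman bracket (hence of $J$), this yields exactly what is needed, namely that a diagram is a counterexample to the Jones conjecture if and only if any mutant of it is. Consequently your conclusion that ``the collection of knots to be tested is unchanged'' is too strong and does not follow; the correct and sufficient conclusion is that the set of \emph{counterexamples} is unchanged. A smaller point in the same part: the identification of a Whitney flip at a separation pair $\{u,v\}$ with a Conway mutation on a $2$-subtangle is less immediate than your one-line sketch suggests (one must see that exactly four strands cross the separating curve, which is not automatic for arbitrary $4$-valent vertices $u,v$); making this precise is essentially the content of \cite{ChG}, which the paper simply cites.
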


\begin{proof}
(1) Thin polyhedra give rise to composite knots only. We claim that if
$K_{1} \# K_{2}$ given by a thin polyhedron
violates the Jones conjecture for non-trivial $K_{1}, K_{2}$
then so do $K_1$ and $K_2$. This follows from the fact that
$$J(K_{1} \# K_{2}) = J(K_1) \cdot J(K_2).$$
Hence, if $J(K_{1} \# K_{2})=1$
then $J(K_1)$ and $J(K_2)$ are monomials, and hence equal to 1 by
\cite[Cor. 3]{Gan}.
Finally, since the crossing numbers of diagrams $K_1$ and of $K_2$ are
less than that of $K_{1} \# K_{2}$,
the statement follows.

(2) By the above it is enough to consider 2-connected Conway polyhedra only. By \cite{ChG} any two abstractly isomorphic 2-connected polyhedra are related by a sequence of mutations. Since mutations preserve the Kauffman bracket and the unknottedness, it is enough to consider only one of these polyhedra for the purpose of verification of the Jones conjecture. Therefore, an enumeration of polyhedra up to abstract isomorphism is sufficient for our purposes.
\end{proof}

\begin{remark}
(1) The crossing number $cr(K)$ of a knot $K$
is the minimal crossing number among all diagrams of $K$.
It is conjectured that
$$cr(K_1 \# K_2)= cr(K_1)+cr(K_2),$$
cf. eg. \cite{Lac}.  Our argument above does not rely on that conjecture.

(2) Because the above Conway polyhedra are considered up to abstract
isomorphism only,
the numbers of Conway polyhedra differ from results shown in \cite{BrMcK}.
\end{remark}

\begin{figure}
\includegraphics[width=0.3\textwidth]{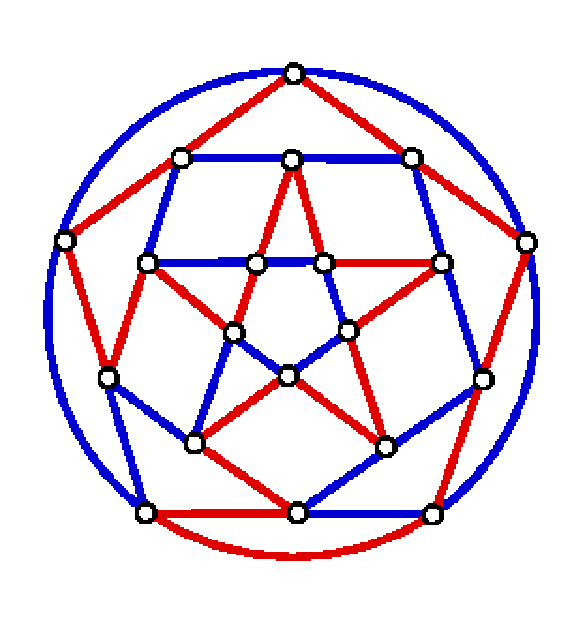} ~~~~~
\begin{tikzpicture}[scale=0.5]
\draw (0,3) -- (6,0) -- (5,6) -- (0,3) -- (3,2.5) -- (2,3.5) -- (4,3)
            -- (3,2.5) -- (6,0) -- (4,3) -- (5,6) -- (2,3.5) -- (0,3);
\node at (0,-1) {};
\end{tikzpicture}
~~~~~
\includegraphics[width=0.3\textwidth]{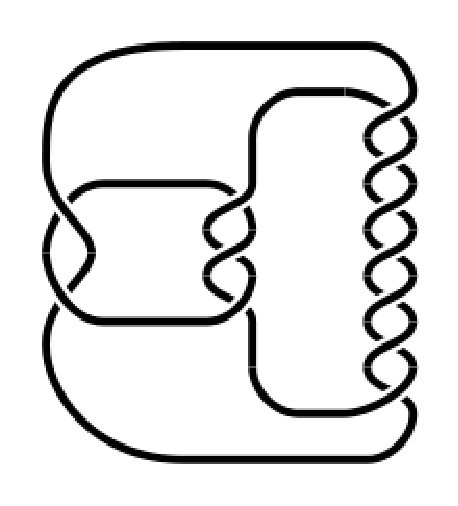}
\caption{Left and center: two examples of a Conway polyhedron.
Right: an algebraic knot called a pretzel knot.}
\label{polyh}
\end{figure}

To facilitate double checking,
two independent approaches were used to enumerate Conway polyhedra:
\begin{enumerate}
\item Generation of simple 4-regular graphs using a backtracking
algorithm and software written in C by Meringer, \cite{Me}.  We modified the algorithm so that only planar graphs were generated, using the algorithm by Boyer and Myrvold
\cite{BMy} implemented in the Boost Graph Library, \cite{Bo}.
\item Generation of planar graphs using the program \verb+plantri+ written
by McKay, \cite{BMc}.
\end{enumerate}


\begin{figure}
\label{consum}
\includegraphics[width=0.45\textwidth]{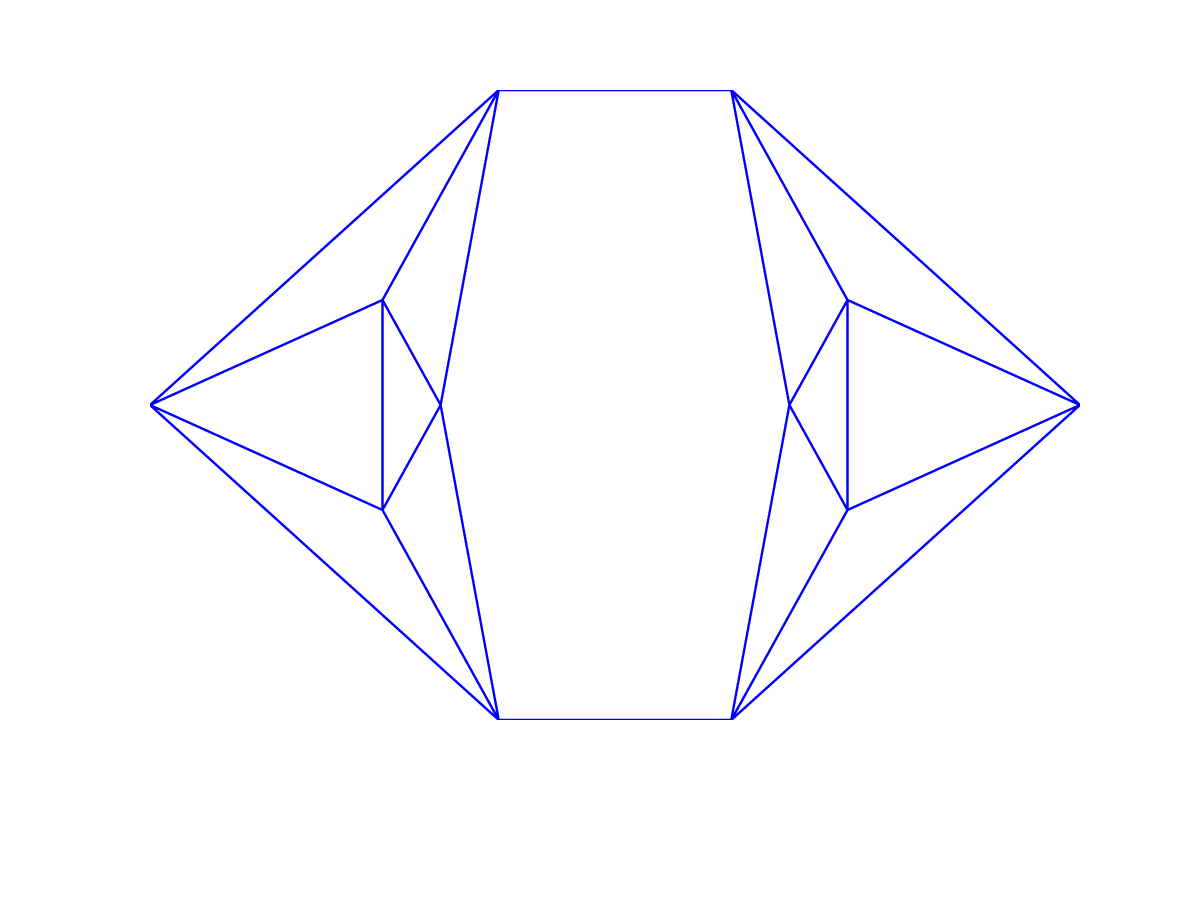} ~~~~~
\includegraphics[width=0.45\textwidth]{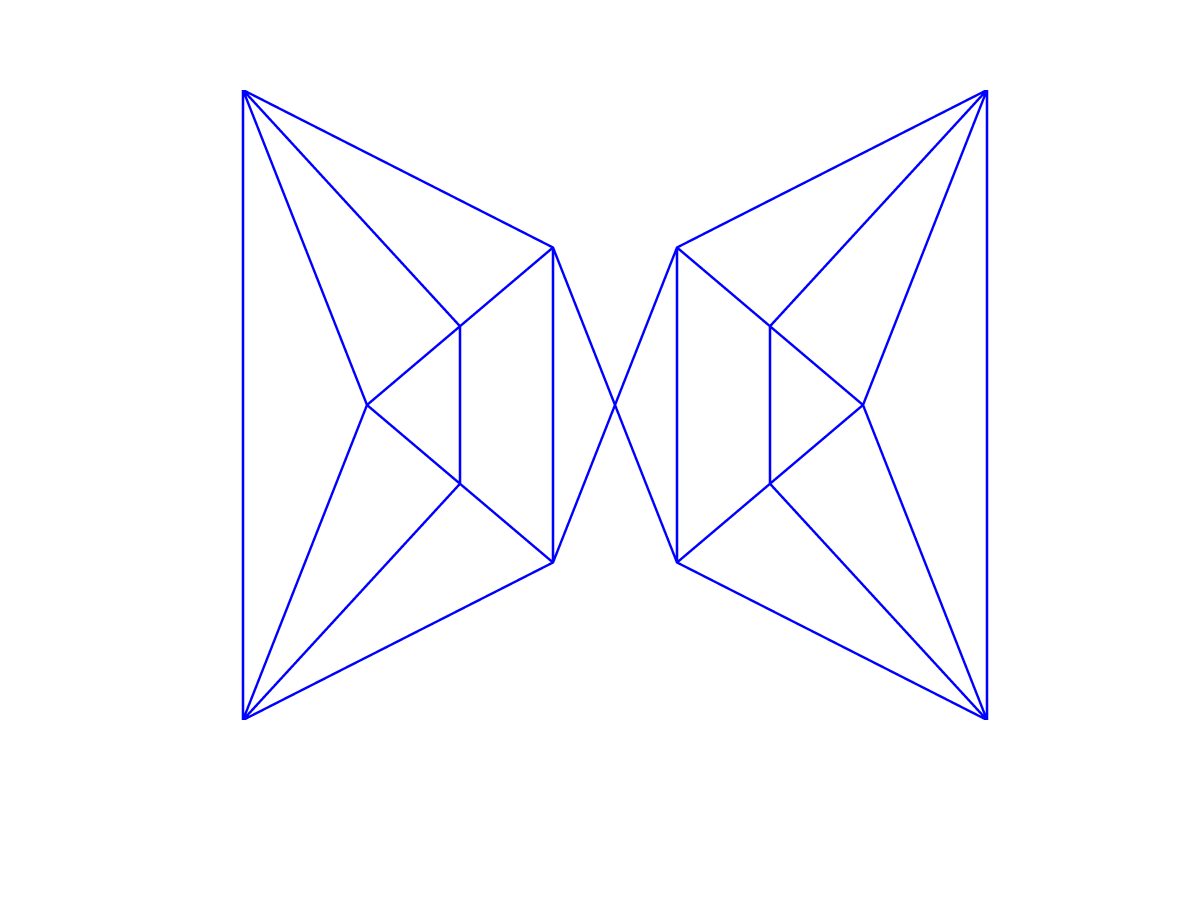}
\caption{Examples of thin Conway polyhedra
in which disconnecting edges emanate from
(a) different vertices and (b) the same vertex.}
\end{figure}

The polyhedron in the middle of Fig. \ref{polyh} is the only non-thin polyhedron with at most 6 vertices.
Numbers of Conway polyhedra are shown in Table \ref{polyh_table}.
It should be noted that the two approaches
agree with one another in number of graphs generated \cite{Me2} but
disagree with the results of Yamada.  Approximately 6-8\% of the Conway polyhedra with
14-22 vertices are thin.

\begin{table}[ht]
\caption{Number of Conway polyhedra (total, thin, and not thin).}
{\begin{tabular}{|r|rrr|}
\hline
$n$ & Total & Thin & Not Thin \\ \hline
 6 &      1 &     0 &      1 \\
 8 &      1 &     0 &      1 \\
 9 &      1 &     0 &      1 \\
10 &      3 &     0 &      3 \\
11 &      3 &     0 &      3 \\
12 &     13 &     1 &     12 \\
13 &     21 &     2 &     19 \\
14 &     68 &     5 &     63 \\
15 &    166 &    13 &    153 \\
16 &    543 &    44 &    499 \\
17 &   1605 &   132 &   1473 \\
18 &   5413 &   439 &   4974 \\
19 &  17735 &  1439 &  16296 \\
20 &  61084 &  4982 &  56102 \\
21 & 210221 & 17322 & 192899 \\
22 & 736287 & 61609 & 674678 \\
\hline
\end{tabular}}
\label{polyh_table}
\end{table}

\section{Computation of Kauffman brackets}
\label{s_KB}

The Kauffman bracket for a knot embedded in a Conway polyhedron with
$v$ vertices is related to the tangle Kauffman brackets
$(a_{i},b_{i})$ by
\begin{equation}\label{KB-state-sum}
\langle K \rangle = \sum_{\sigma} c_{1} c_{2} \cdots c_{v}
(-t^{1/2}-t^{-1/2})^{L(\sigma)-1}
\end{equation}
where $\sigma$ refers to one of the $2^{v}$ smoothings of the Conway
polyhedron, $c_{i}$ to $a_{i}$ or $b_{i}$, depending on the smoothing,
and $L(\sigma)$ to the number of loops in the smoothing.

The above summation formula is very computationally demanding, see next section.
Fortunately, the state sum approach can be improved upon significantly by a step by step divide-and-conquer method, in which one proceeds by computing Kauffman bracket for subtangles of a knot first. (A similar idea can be found in \cite{BN}.) For that purpose we utilize the concept of the Kauffman bracket of an  $n$-tangle which is a straightforward generalization of the Kauffman bracket of a 2-tangle.
The main difference being that it takes values in
the relative skein module of a disk with $n$ boundary points, see \cite{Pr1}, which is
 the free module $\Z[A^{\pm 1}]M_n,$ with the basis $M_n$ given by the crossingless matchings of $n$ points on a circle.
Note that $|M_n|$ is the Catalan number
$C_{m}=\frac{1}{m+1}{2m \choose m}$ for $m=n/2.$



The step by step method can add vertices in any order in a Conway
polyhedron $P$, e.g. 1-4-6-5-2-3 in the 6-vertex polyhedron.
We then consider a sequence of subtangles $T_{n}$ of $P$ obtained by taking the
subpolyhedron of $P$ composed of the first $n$ vertices in this vertex
sequence. For computational efficiency, we choose the above sequence so that each sub-polyhedron is connected and the number of dangling
edges of each of the subtangles is minimal.

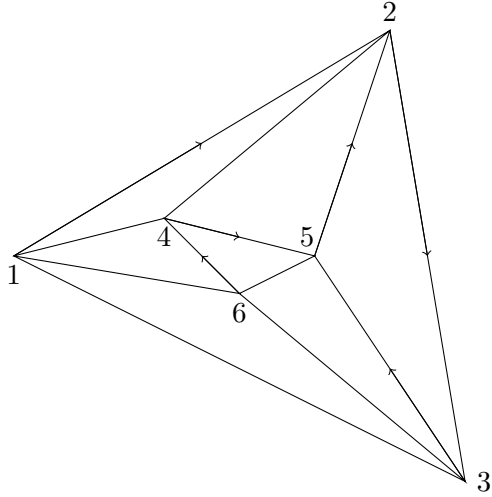
\begin{figure}\label{slices}
\label{pieces}
~~~~~~~~~~~~~~~~~~~~
\begin{tikzpicture}
\draw (0,3) -- (6,0) -- (5,6) -- (0,3) -- (3,2.5) -- (2,3.5) -- (4,3)
            -- (3,2.5) -- (6,0) -- (4,3) -- (5,6) -- (2,3.5) -- (0,3);
\draw [->] (0,3) -- (2.5,4.5);
\draw [->] (2,3.5) -- (3,3.25);
\draw [->] (3,2.5) -- (2.5,3);
\draw [->] (4,3) -- (4.5,4.5);
\draw [->] (5,6) -- (5.5,3);
\draw [->] (6,0) -- (5,1.5);
\node at (0,2.75) {1};
\node at (5,6.25) {2};
\node at (6.25,0) {3};
\node at (2,3.25) {4};
\node at (3.9,3.25) {5};
\node at (3,2.25) {6};
\end{tikzpicture}
\caption{The 6-vertex polyhedron. NW corners for each vertex shown by outgoing arrows.  (For example, the NW corner for vertex 1 attaches to vertex 2.)}
\end{figure}


Now we compute Kauffman brackets of successive tangles $T_n.$
In the example above,
\begin{equation}
\begin{tikzpicture}[scale=0.03528]
\draw (0,0) -- (15,-8);
\draw (0,0) -- (15,0);
\draw (0,0) -- (15,8);
\draw [->] (0,0) -- (15,15);
\end{tikzpicture}
\begin{tikzpicture}[scale=0.03528]
\draw(-38,9) node {$= t^{-1/2}Q_1$};
\draw (0,18) arc [radius=9, start angle=90, end angle=270];
\draw (0,12) arc [radius=3, start angle=90, end angle=270];
\end{tikzpicture}
\begin{tikzpicture}[scale=0.03528]
\draw(-20,9) node {$+Q_2$};
\draw (0,18) arc [radius=3, start angle=90, end angle=270];
\draw (0,6) arc [radius=3, start angle=90, end angle=270];
\end{tikzpicture}
\end{equation}
where
$Q_1=p_1,$ $Q_2=q_1,$ for some $p_1,q_1\in \Z[t^{\pm 1}],$
by Lemma \ref{KB-tangle}. The arrow denotes the NW corner of vertex 1.  Next we compute
$\langle T_2\rangle$ from $\langle T_1\rangle:$
\begin{equation}
\begin{tikzpicture}[scale=0.03528]
\draw (0,18) arc [radius=9, start angle=90, end angle=270];
\draw (0,12) arc [radius=3, start angle=90, end angle=270];
\draw (0,12) -- (6,12) -- (18,18);
\draw [->] (6,12) -- (18,12);
\draw (6,12) -- (18,6);
\end{tikzpicture}
\begin{tikzpicture}[scale=0.03528]
\draw(-40,15) node {$=t^{-1/2} p_{4}$\quad };
\draw (0,30) arc [radius=15, start angle=90, end angle=270];
\draw (0,24) arc [radius=3, start angle=90, end angle=270];
\draw (0,12) arc [radius=3, start angle=90, end angle=270];
\end{tikzpicture}
\begin{tikzpicture}[scale=0.03528]
\draw(-25,15) node {$+q_{4}$};
\draw (0,30) arc [radius=15, start angle=90, end angle=270];
\draw (0,24) arc [radius=9, start angle=90, end angle=270];
\draw (0,18) arc [radius=3, start angle=90, end angle=270];
\end{tikzpicture}
\end{equation}
\begin{equation}
\begin{tikzpicture}[scale=0.03528]
\draw (0,18) arc [radius=3, start angle=90, end angle=270];
\draw (0,6) arc [radius=3, start angle=90, end angle=270];
\draw (0,12) -- (6,12) -- (18,18);
\draw [->] (6,12) -- (18,12);
\draw (6,12) -- (18,6);
\end{tikzpicture}
\begin{tikzpicture}[scale=0.03528]
\draw(-40,15) node {$=t^{-1/2} p_{4}$};
\draw (0,30) arc [radius=9, start angle=90, end angle=270];
\draw (0,24) arc [radius=3, start angle=90, end angle=270];
\draw (0,6) arc [radius=3, start angle=90, end angle=270];
\end{tikzpicture}
\begin{tikzpicture}[scale=0.03528]
\draw(-25,15) node {$+q_{4}$};
\draw (0,30) arc [radius=3, start angle=90, end angle=270];
\draw (0,18) arc [radius=3, start angle=90, end angle=270];
\draw (0,6) arc [radius=3, start angle=90, end angle=270];
\end{tikzpicture}
\end{equation}
where the arrow denotes the NW corner of vertex 4.  Thus,
\begin{equation}
\hspace*{-.3in}
\begin{tikzpicture}[scale=0.03528]
\draw (0,0) -- (45,-24);
\draw (0,0) -- (45,-16);
\draw (30,0) -- (45,-8);
\draw [->] (0,0) -- (45,0);
\draw (30,0) -- (45,8);
\draw [->] (0,0) -- (45,16);
\end{tikzpicture}
\begin{tikzpicture}[scale=0.03528]
\draw(-30,15) node {$=R_1$};
\draw(-30,-5) node {};
\draw (0,30) arc [radius=15, start angle=90, end angle=270];
\draw (0,24) arc [radius=3, start angle=90, end angle=270];
\draw (0,12) arc [radius=3, start angle=90, end angle=270];
\end{tikzpicture}
\begin{tikzpicture}[scale=0.03528]
\draw(-40,15) node {$+t^{-1/2}R_2$};
\draw(-30,-5) node {};
\draw (0,30) arc [radius=15, start angle=90, end angle=270];
\draw (0,24) arc [radius=9, start angle=90, end angle=270];
\draw (0,18) arc [radius=3, start angle=90, end angle=270];
\end{tikzpicture}
\begin{tikzpicture}[scale=0.03528]
\draw(-35,15) node {$+t^{-1/2}R_3$};
\draw(0,-5) node {};
\draw (0,30) arc [radius=9, start angle=90, end angle=270];
\draw (0,24) arc [radius=3, start angle=90, end angle=270];
\draw (0,6) arc [radius=3, start angle=90, end angle=270];
\end{tikzpicture}
\begin{tikzpicture}[scale=0.03528]
\draw(-25,15) node {$+R_4$};
\draw(-30,-5) node {};
\draw (0,30) arc [radius=3, start angle=90, end angle=270];
\draw (0,18) arc [radius=3, start angle=90, end angle=270];
\draw (0,6) arc [radius=3, start angle=90, end angle=270];
\end{tikzpicture}
\end{equation}
where $R_1=Q_1 t^{-1} p_4,$ $R_2=Q_2p_4,$ $R_3=Q_1q_4,$ $R_4=Q_2q_4.$\\
$\langle T_3\rangle$ is computed in a similar manner:
\begin{equation}
\begin{tikzpicture}[scale=0.03528]
\draw (0,30) arc [radius=15, start angle=90, end angle=270];
\draw (0,24) arc [radius=3, start angle=90, end angle=270];
\draw (0,12) arc [radius=3, start angle=90, end angle=270];
\draw (0,6) -- (12,6);
\draw (0,12) -- (12,12);
\draw [->] (24,6) -- (12,12);
\draw (24,12) -- (12,6);
\end{tikzpicture}
\begin{tikzpicture}[scale=0.03528]
\draw(-45,15) node {$=t^{-1/2} p_{6} \epsilon$};
\draw (0,30) arc [radius=15, start angle=90, end angle=270];
\draw (0,24) arc [radius=3, start angle=90, end angle=270];
\draw (0,12) arc [radius=3, start angle=90, end angle=270];
\end{tikzpicture}
\begin{tikzpicture}[scale=0.03528]
\draw(-30,15) node {$+ q_{6}$};
\draw (0,30) arc [radius=15, start angle=90, end angle=270];
\draw (0,24) arc [radius=3, start angle=90, end angle=270];
\draw (0,12) arc [radius=3, start angle=90, end angle=270];
\end{tikzpicture}
\end{equation}
\begin{equation}
\begin{tikzpicture}[scale=0.03528]
\draw (0,30) arc [radius=15, start angle=90, end angle=270];
\draw (0,24) arc [radius=9, start angle=90, end angle=270];
\draw (0,18) arc [radius=3, start angle=90, end angle=270];
\draw (0,6) -- (12,6);
\draw (0,12) -- (12,12);
\draw [->] (24,6) -- (12,12);
\draw (24,12) -- (12,6);
\end{tikzpicture}
\begin{tikzpicture}[scale=0.03528]
\draw(-40,15) node {$=t^{-1/2} p_{6}$};
\draw (0,30) arc [radius=15, start angle=90, end angle=270];
\draw (0,24) arc [radius=3, start angle=90, end angle=270];
\draw (0,12) arc [radius=3, start angle=90, end angle=270];
\end{tikzpicture}
\begin{tikzpicture}[scale=0.03528]
\draw(-30,15) node {$+ q_{6}$};
\draw (0,30) arc [radius=15, start angle=90, end angle=270];
\draw (0,24) arc [radius=9, start angle=90, end angle=270];
\draw (0,18) arc [radius=3, start angle=90, end angle=270];
\end{tikzpicture}
\end{equation}
\begin{equation}
\begin{tikzpicture}[scale=0.03528]
\draw (0,30) arc [radius=9, start angle=90, end angle=270];
\draw (0,24) arc [radius=3, start angle=90, end angle=270];
\draw (0,6) arc [radius=3, start angle=90, end angle=270];
\draw (0,6) -- (12,6);
\draw (0,12) -- (12,12);
\draw [->] (24,6) -- (12,12);
\draw (24,12) -- (12,6);
\end{tikzpicture}
\begin{tikzpicture}[scale=0.03528]
\draw(-40,15) node {$=t^{-1/2} p_{6}$};
\draw (0,30) arc [radius=15, start angle=90, end angle=270];
\draw (0,24) arc [radius=3, start angle=90, end angle=270];
\draw (0,12) arc [radius=3, start angle=90, end angle=270];
\end{tikzpicture}
\begin{tikzpicture}[scale=0.03528]
\draw(-25,15) node {$+ q_{6}$};
\draw (0,30) arc [radius=9, start angle=90, end angle=270];
\draw (0,24) arc [radius=3, start angle=90, end angle=270];
\draw (0,6) arc [radius=3, start angle=90, end angle=270];
\end{tikzpicture}
\end{equation}
\begin{equation}
\begin{tikzpicture}[scale=0.03528]
\draw (0,30) arc [radius=3, start angle=90, end angle=270];
\draw (0,18) arc [radius=3, start angle=90, end angle=270];
\draw (0,6) arc [radius=3, start angle=90, end angle=270];
\draw (0,6) -- (12,6);
\draw (0,12) -- (12,12);
\draw [->] (24,6) -- (12,12);
\draw (24,12) -- (12,6);
\end{tikzpicture}
\begin{tikzpicture}[scale=0.03528]
\draw(-35,15) node {$=t^{-1/2} p_{6}$};
\draw (0,30) arc [radius=3, start angle=90, end angle=270];
\draw (0,18) arc [radius=9, start angle=90, end angle=270];
\draw (0,12) arc [radius=3, start angle=90, end angle=270];
\end{tikzpicture}
\begin{tikzpicture}[scale=0.03528]
\draw(-20,15) node {$+ q_{6}$};
\draw (0,30) arc [radius=3, start angle=90, end angle=270];
\draw (0,18) arc [radius=3, start angle=90, end angle=270];
\draw (0,6) arc [radius=3, start angle=90, end angle=270];
\end{tikzpicture}
\end{equation}
where the factor of $\epsilon = -t^{1/2} - t^{-1/2}$ comes from a loop
closure. Consequently,
\begin{equation}
\begin{tikzpicture}[scale=0.03528]
\draw (0,0) -- (69,-30);
\draw (0,0) -- (46,-12);
\draw (46,-12) -- (69,-18);
\draw (46,-12) -- (69,-6);
\draw (46,-12) -- (23,6);
\draw [->] (46,-12) -- (34.5,-3);
\draw (0,0) -- (23,6);
\draw [->] (23,6) -- (69,6);
\draw (23,6) -- (69,18);
\draw [->] (0,0) -- (69,30);
\end{tikzpicture}
\begin{tikzpicture}[scale=0.03528]
\draw(-30,15) node {$=S_{1}$};
\draw(-30,-12) node {};
\draw (0,30) arc [radius=15, start angle=90, end angle=270];
\draw (0,24) arc [radius=3, start angle=90, end angle=270];
\draw (0,12) arc [radius=3, start angle=90, end angle=270];
\end{tikzpicture}
\begin{tikzpicture}[scale=0.03528]
\draw(-40,15) node {$+t^{-1/2}S_{2}$};
\draw(-30,-12) node {};
\draw (0,30) arc [radius=15, start angle=90, end angle=270];
\draw (0,24) arc [radius=9, start angle=90, end angle=270];
\draw (0,18) arc [radius=3, start angle=90, end angle=270];
\end{tikzpicture}
\begin{tikzpicture}[scale=0.03528]
\draw(-35,15) node {$+t^{-1/2}S_{3}$};
\draw(-30,-12) node {};
\draw (0,30) arc [radius=9, start angle=90, end angle=270];
\draw (0,24) arc [radius=3, start angle=90, end angle=270];
\draw (0,6) arc [radius=3, start angle=90, end angle=270];
\end{tikzpicture}
\begin{tikzpicture}[scale=0.03528]
\draw(-35,15) node {$+t^{-1/2}S_{4}$};
\draw(-30,-12) node {};
\draw (0,30) arc [radius=3, start angle=90, end angle=270];
\draw (0,18) arc [radius=9, start angle=90, end angle=270];
\draw (0,12) arc [radius=3, start angle=90, end angle=270];
\end{tikzpicture}
\begin{tikzpicture}[scale=0.03528]
\draw(-20,15) node {$+S_{5}$};
\draw(-30,-12) node {};
\draw (0,30) arc [radius=3, start angle=90, end angle=270];
\draw (0,18) arc [radius=3, start angle=90, end angle=270];
\draw (0,6) arc [radius=3, start angle=90, end angle=270];
\end{tikzpicture}
\end{equation}
where
\begin{equation}
S_{1} = R_1 (-1 - t) p_6+R_1q_6+R_2 t^{-1} p_6+R_3 t^{-1} p_6,
\end{equation}
\begin{equation}
S_{2} = R_2q_6,
\quad S_{3} = R_3q_6,
\quad S_{4} = R_4p_6,
\quad S_{5} = R_4q_6
\end{equation}
This procedure continues for another three steps (vertices).

\section{Computational complexity of Kauffman bracket computation}

The computational complexity of the state sum (\ref{KB-state-sum}) and of our step by step methods stems from a large number of multiplications of Laurent polynomials $p_1,...,p_v,q_1,...,q_v$, where $v$ is the number of vertices in the Conway polyhedron. (Note for example that multiplying two Laurent polynomials of span 3 requires 16 monomial multiplications and 9 additions.)
Therefore, to compare these two methods let us consider the numbers of such multiplications involved $N_1(v)$ in the first method and $N_2(v)$ in the second, both as functions of $v$. (For simplicity, we are not counting additions, multiplications by powers of $t$ and by the loop factors $\epsilon=-t^2-t^{-2}.$)

Since (\ref{KB-state-sum}) has $2^{v}$ terms, $N_1(v)=2^v(v-1)$.

In our step by step method, there are $2C_{n_{k-1}/2}$ multiplications at $k$-th step, for $k=1,...,v,$ where $n_{k-1}$ is the number of the endpoints of the $k$-$1$-th partial tangle $T_{k-1}.$ Therefore,  $N_2(v)=\sum_{k=1}^v 2C_{n_{k-1}/2}.$  Note that $n_k\leq n_{k-1}+2$ for every $k$ and, hence, $n_k\leq 2k$ and, similarly, $n_k\leq 2(v-k)$. Therefore
$$N_2(v)\leq 4\sum_{i=1}^{\lfloor v/2\rfloor}C(i).$$
That is much smaller than $N_1(v)$ for small $v$. For example, $N_2(4)\leq 12$ versus $N_1(4)=2^4(4-1)=48$.
For large $v$, we have
$$N_2(v)\leq 4\sum_{i=1}^{\lfloor v/2\rfloor}C(i) \leq \frac{80\cdot 4^{\lfloor v/2\rfloor}}{9\sqrt{e\pi} \lfloor v/2\rfloor^{3/2}}
,$$
\cite{Catalan},
which grows slower in $v$ than $2^v (v-1)$.

The above formulas for $N_1(v)$ and $N_2(v)$ do not take into account the
dependence of the computational effort of the Kauffman bracket computation on the complexity of multiplications of $p_1,...,p_v, q_1,...,q_v,$ which increases with the numbers of monomials in these Laurent polynomials.
(The product of two polynomials involving $n_1$ and $n_2$ monomials respectively requires $n_1\cdot n_2$ monomial multiplications.)

In a final note, let us mention that there is no polynomial time algorithm in the number of crossings for computing the Kauffman bracket. \cite{JVW} shows that every algorithm for computing Jones polynomial is \#P-hard.


\section{Pretesting diagrams with $t = -1$}

As we have observed above most of the computational effort in computing the Jones polynomial goes into polynomial multiplications.  However,
if the Laurent polynomials are first evaluated at a certain value of $t$ and stored
as floating point numbers for later use, polynomial multiplications are
replaced by much less intensive single floating point operations.
For that reason we have pretested all knot diagrams for triviality of their Jones
polynomial by computing their value at $t=-1$. That value, yielding the knot determinant, appears to be optimal for several reasons:
\begin{itemize}
\item Unlike for $t=1,i,e^{2\pi i/3},$ the Jones polynomial at $t=-1$ has an unlimited  range of values.
\item The $t=-1$ test inexpensively eliminates a large fraction of candidates for non-triviality of the Jones polynomial, relative to full computation of the Kauffman bracket. Among $802$ knots up to 11 crossings, the only ones with determinant $1$ are  $10_{124}, 10_{153}, 11n_{34}, 11n_{42}, 11n_{49}$ and $11n_{116},$ see \cite{KA}.
\item The computations of the Jones polynomial for $t=-1$ can be precise for up to at least 53 crossings in IEEE double precision, a floating point system with 53 bits of precision.
\item At $t = -1$, $\epsilon = 0$, making for shorter computation sequences.
\end{itemize}

\section{Elimination of diagrams with crossing-reducing or simplifying
pass moves}
\label{s_pass}

A {\em pass move} in a link diagram moves a strand with successive over-crossings or under-crossings to another location in the diagram, see Fig. \ref{pass6}.  In the process, the number of crossings
may change.


If a knot diagram has a crossing decreasing pass move,
then that diagram need not be considered for testing of the Jones conjecture, because another diagram of that knot with fewer crossings was tested already.
Also, if a diagram has a pass move that reduces the number of vertices in
the Conway polyhedron, then that diagram need not be considered, since an
equivalent diagram in a smaller Conway polyhedron was already considered.

\begin{figure}
\includegraphics[width=0.45\textwidth]{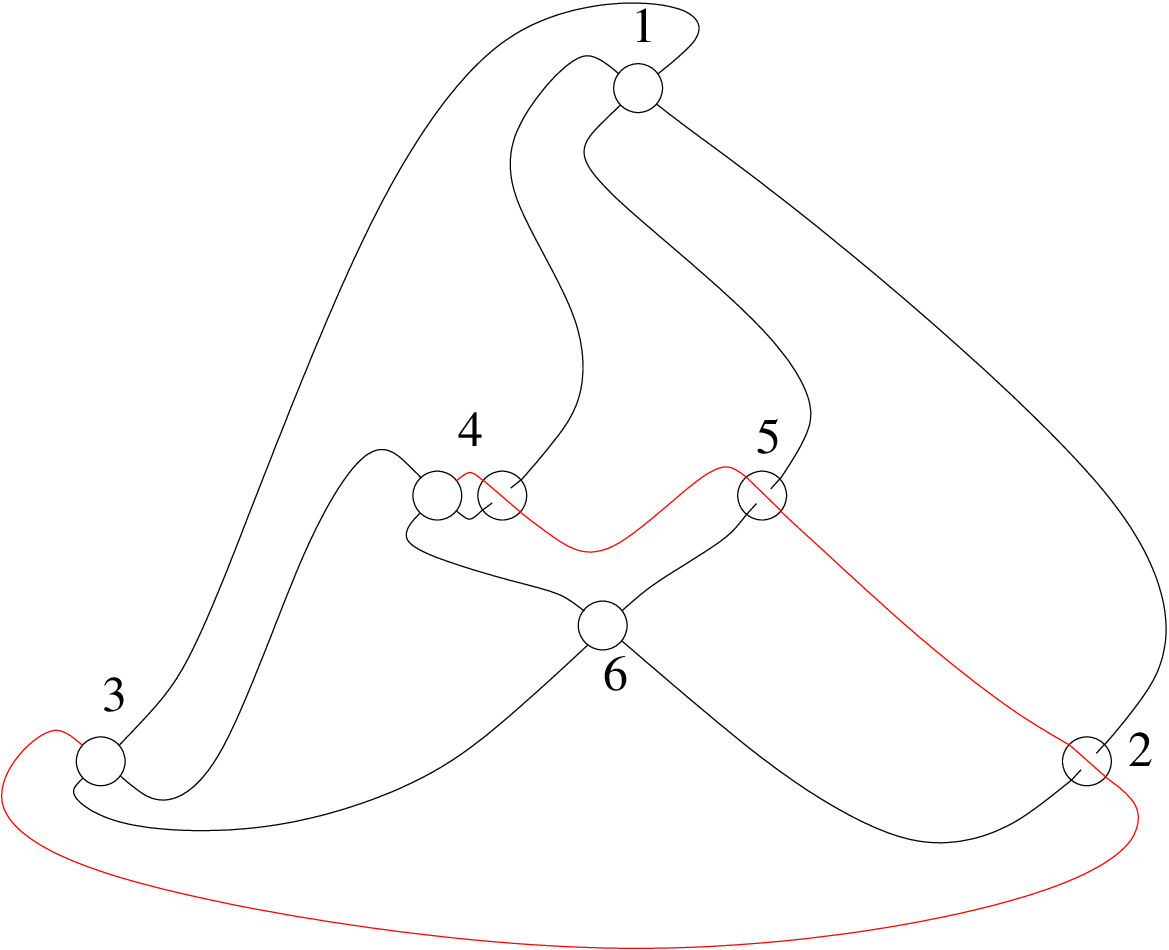}
\includegraphics[width=0.45\textwidth]{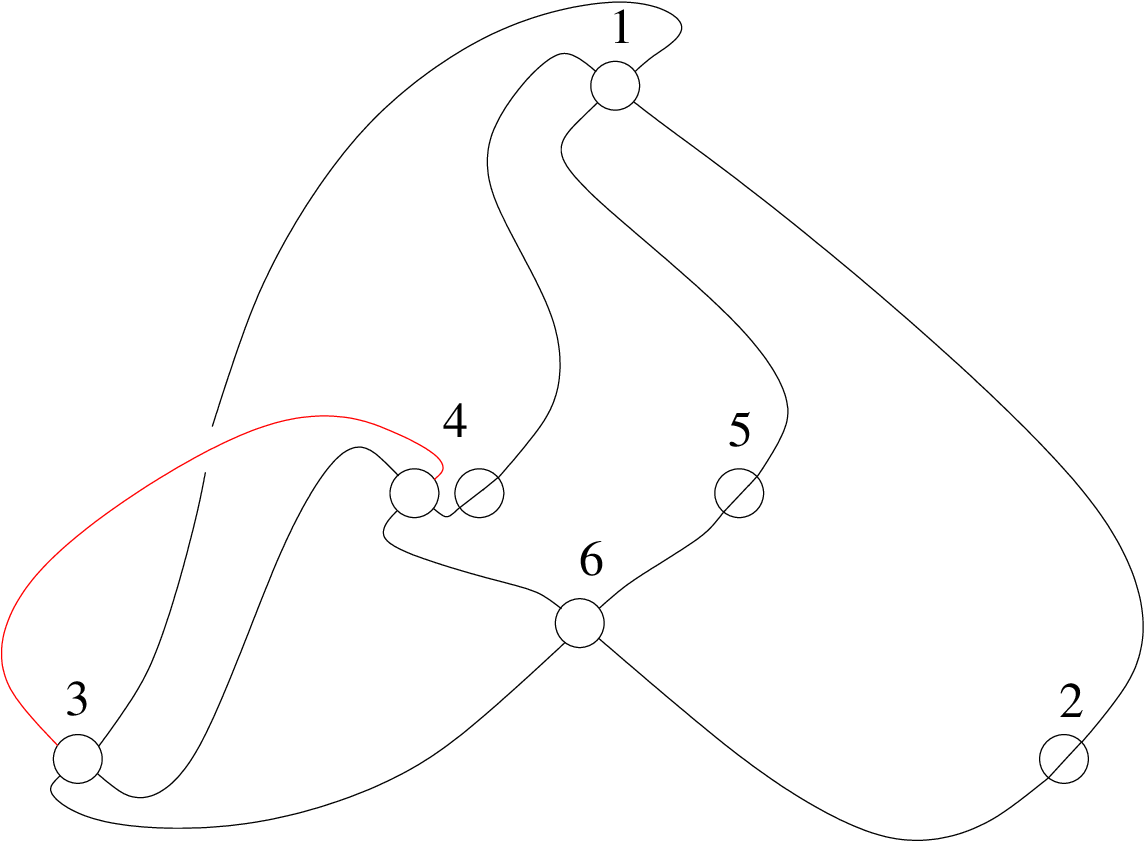}
\caption{Example of a (3,1)-pass move in the 6-vertex polyhedron
Left: before pass move; right: after pass move.}
\label{pass6}
\end{figure}

Redirecting a strand over vertices 4, 5, and 2 in Fig. \ref{pass6}  (left),
over vertices 3, 4 (right)
results in a diagram with fewer crossings, regardless of contents of vertices 3, 4, and 6.

It is known that there are unknot diagrams without any crossing reducing pass move, see eg. \cite{Ya}.  In our computer enumeration of knot diagrams, we found some new examples of such unknot diagrams as well, for example Fig. \ref{suspect}.

Interestingly, the above pass move simplification method eliminated all Jones conjecture counterexample candidates based on the 6-vertex polyhedron, up to 22 crossings.





\section{Testing trivial Jones polynomial diagrams for unknottedness}
\label{s_reco}

Testing trivial Jones polynomial diagrams for unknottedness was achieved by SnapPy, by computing knot group presentations. We relied here on the extremely high efficiency of SnapPy in providing the minimal (single generator, no relations)  presentation for the unknot diagrams. Nonetheless, some diagrams presented a challenge for SnapPy. Fig. \ref{suspect} shows a  trivial Jones polynomial diagram which SnapPy was unable to recognize as an unknot, using a coordinates and crossing representation of the knot as input. It returned the following presentation of its knot group:
$$\langle a,b,c \mid   acb^{-1}c^{-1}b^{-1}ca^2cb^{-1}c^{-1}b^{-1}cabcbc^{-1}b^{-1}a^{-1}c^{-1}bcbc^{-1}a^{-2}b,$$
$$\quad   aba^{-1}bacb^{-1}c^{-1}b^{-1}caba^{-1}ba^{-1}bacb^{-1}c^{-1}b^{-1}c\rangle.$$
(However, SnapPy was able to recognize this diagram as the trivial knot by its Dowker code.)
As we have mentioned earlier, this diagram doesn't admit a crossing reducing pass move.

\begin{figure}
\center{\includegraphics[width=2.5in]{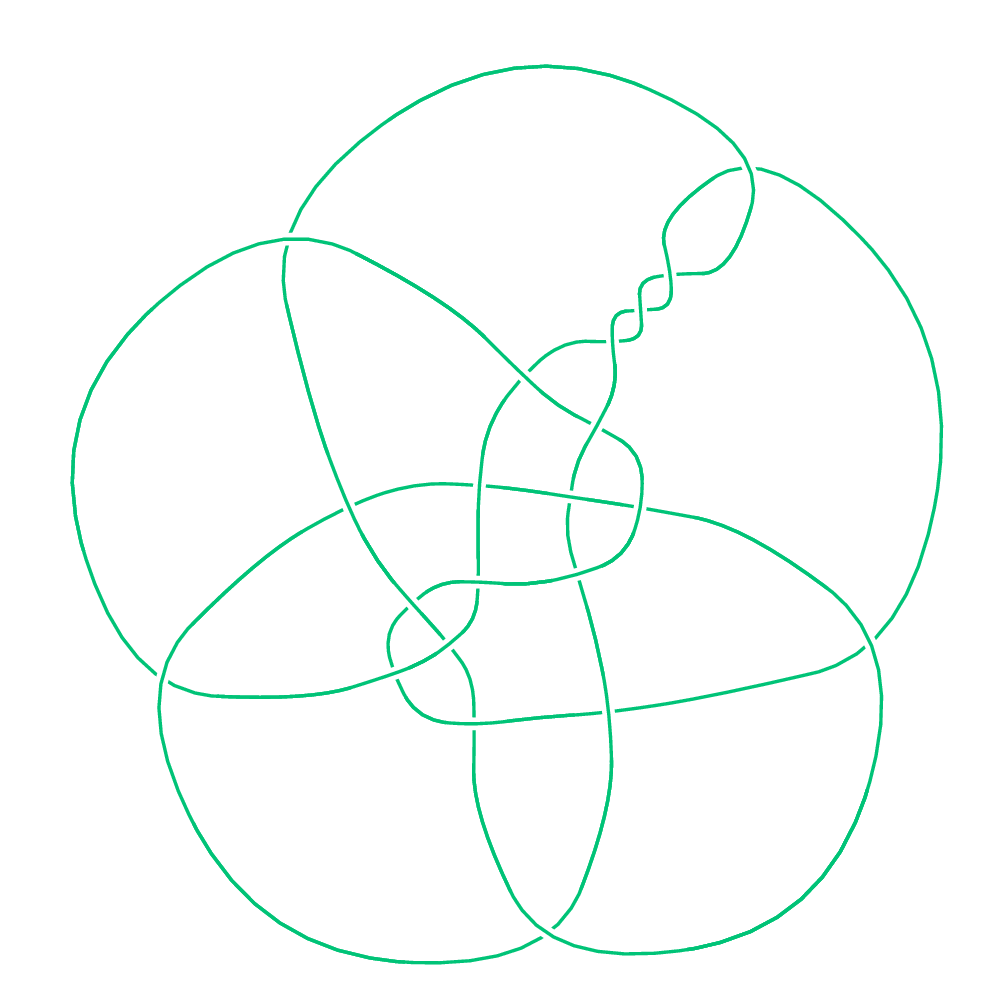}}
\caption{An unknot diagram with no crossing reducing pass moves}
\label{suspect}
\end{figure}

\section{Computational aspects of the project}
\label{s_computation}

In the process of verifying the Jones conjecture up to 22 crossings, we tested  2,257,956,747,340 non-algebraic knot diagrams. It took 31.9 days of CPU time on an Intel i7-4790 4-core desktop machine to generate all algebraically trivializable non-algebraic knot diagrams up to 21 crossings and to compute their determinants. After that, it took 1.7 days to compute Kauffman brackets of those with determinant $1$.  (The elapsed time was much shorter because computations were run on several cores at the same time.)  Computations for 22 crossings were performed on 8-core Intel Xeon L5520 processors operated by the Center for Computational Research at the University at Buffalo and took 439.2 core-days of wall clock time.

A breakdown of the CPU times by run is shown in Fig. \ref{times}. (Please note that 22 crossing results
were obtained on Intel Xeon processor, which was somewhat slower than our desktop Intel i7 one.)
Not surprisingly, the times rise
approximately exponentially with crossing numbers.  With respect to the
number of vertices, two competing trends contribute to the overall trend:
the increasing number of Conway polyhedra with number of vertices, and
the decreasing number of knot diagrams with increasing vertex number
(within a given crossing number).  
Numbers of diagrams considered (Fig. \ref{num_test_cand}) are also consistent with these trends.

The algebraic knot calculations took, in total, less than a day.
%
The testing for unknottedness consisting of using SnapPy to check the
knot group presentation took approximately 3.7 hours.

Preceding the above Jones conjecture testing routines, significant CPU times were required for Conway polyhedron generation (about 60 days) and a generation of algebraic tangles. By far the most extensive portion of the algebraic tangle
calculations were the trivializability calculations, which took about four weeks
of CPU time up to 17 crossings, and 32 weeks of CPU time for 18 crossings.

\begin{figure}]h]
\includegraphics[width=0.95\textwidth]{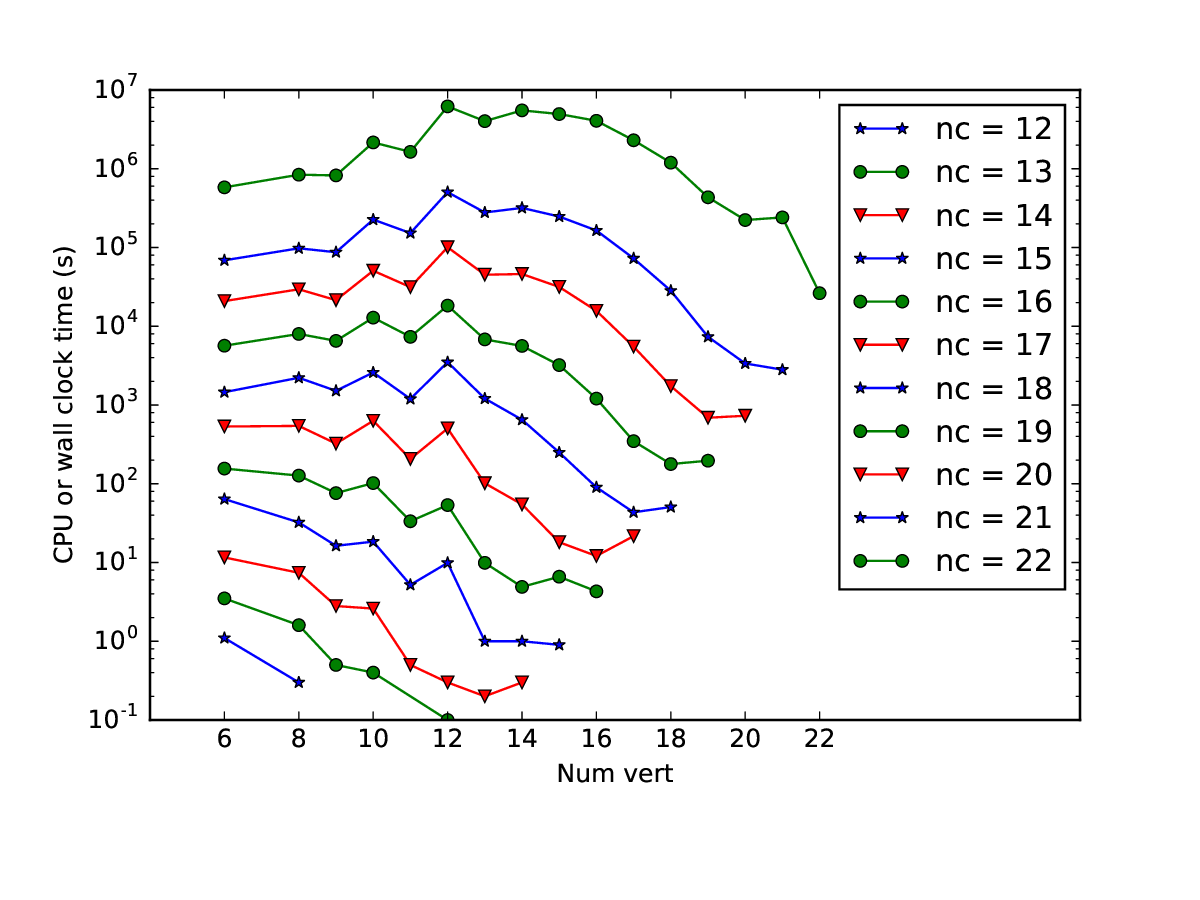} \\
\includegraphics[width=0.95\textwidth]{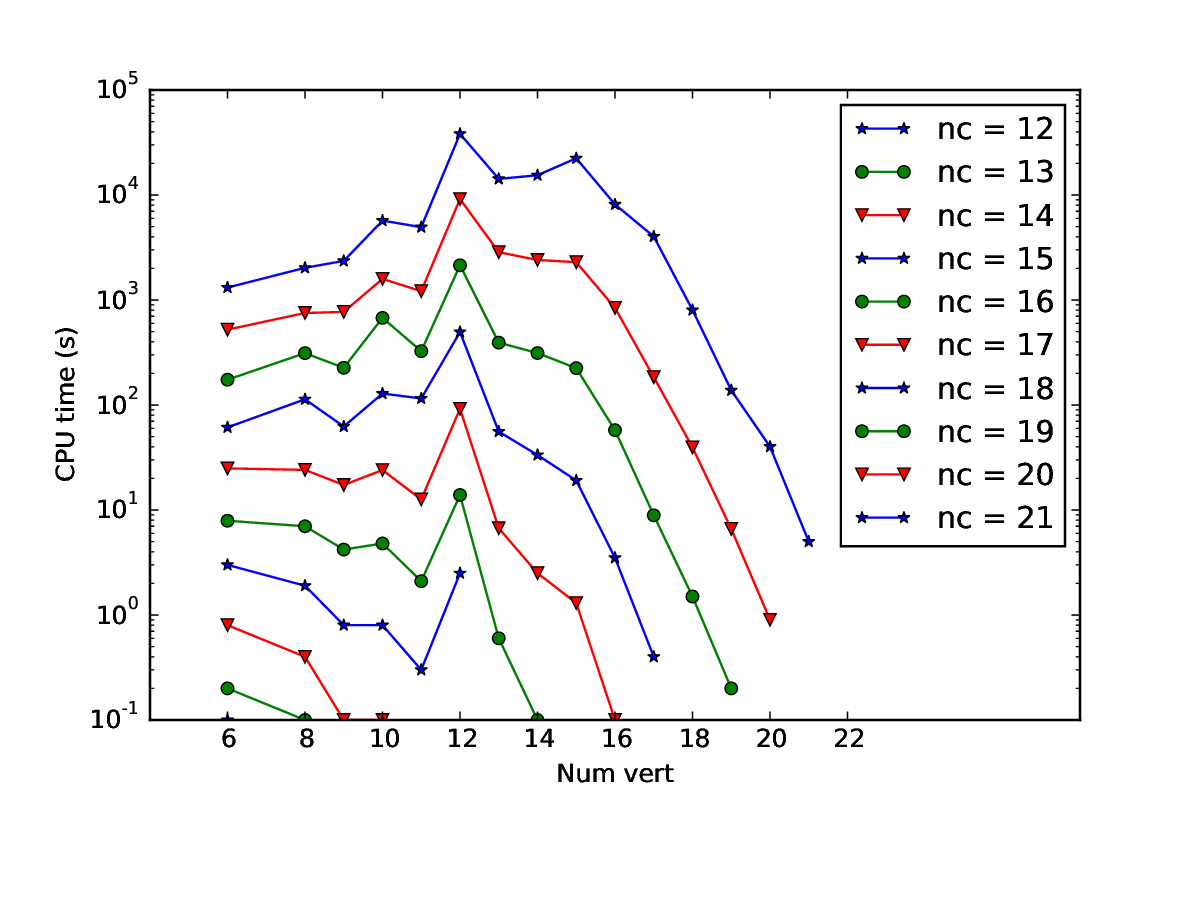}
\caption{Top: CPU times for non-algebraic knot diagrams generation and determinant testing.
Bottom: CPU times for Kauffman bracket computations of non-algebraic determinant 1 diagrams.}
\label{times}
\end{figure}

\begin{figure}[h]
\includegraphics[width=0.95\textwidth]{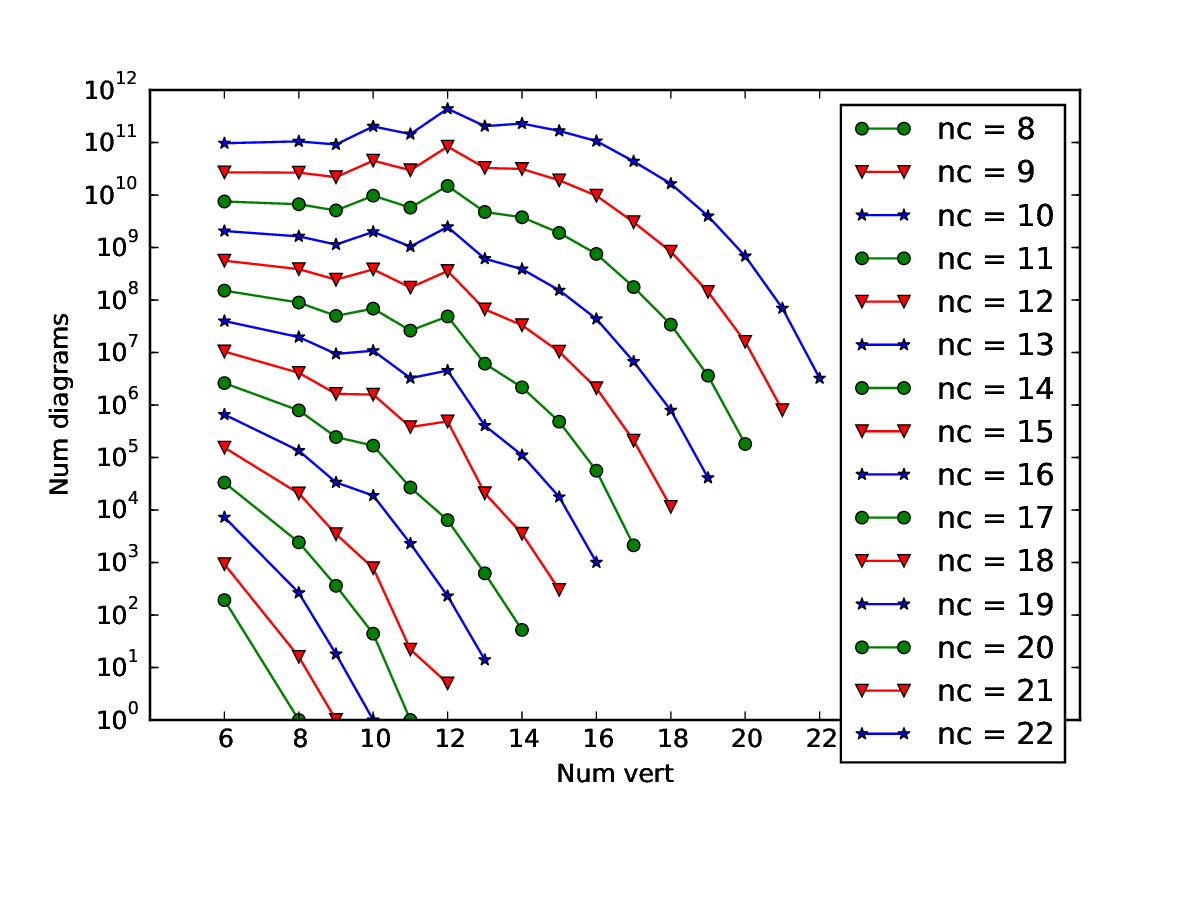} \\
\includegraphics[width=0.95\textwidth]{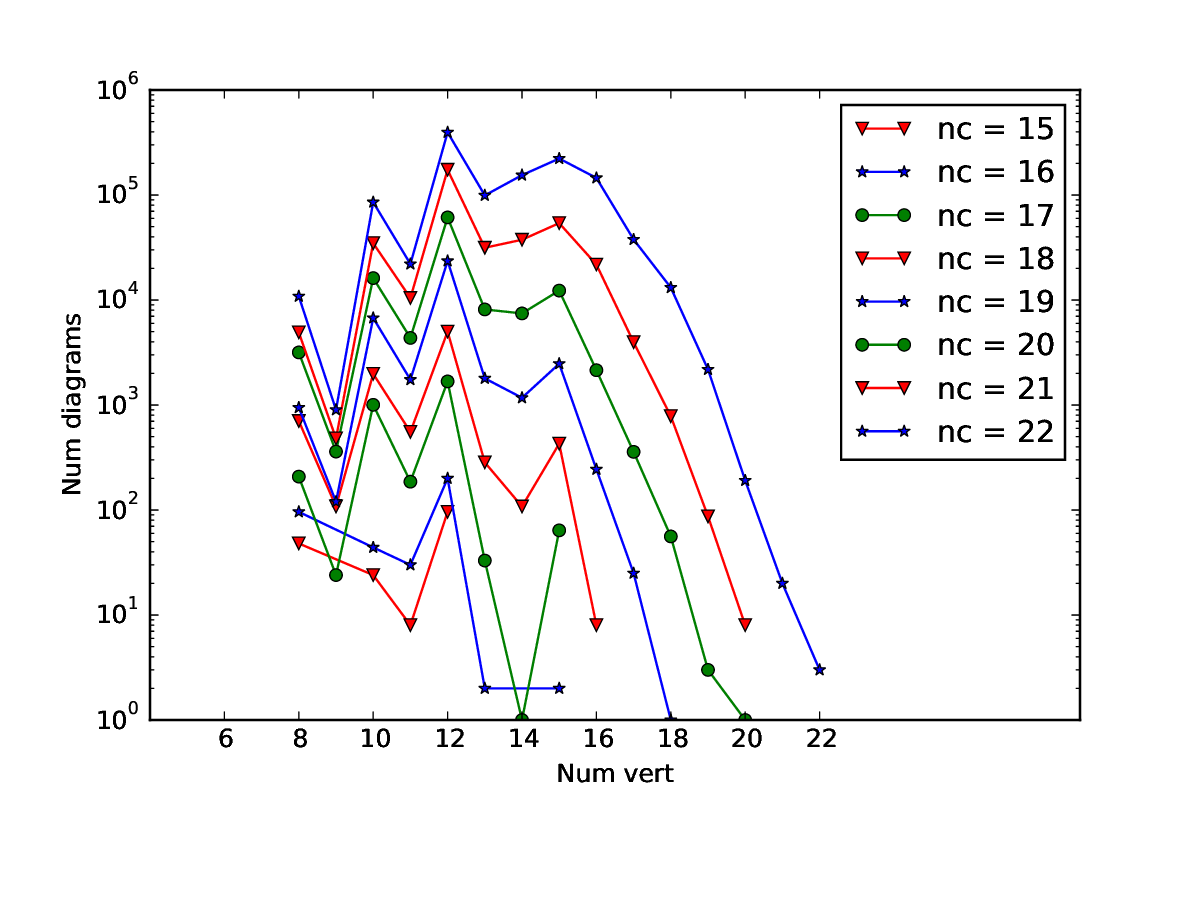}
\caption{Top: Number of non-algebraic knot diagrams tested for determinant.
Bottom: Number of diagrams with monomial Kauffman bracket tested for unknottedness.}
\label{num_test_cand}
\end{figure}
\clearpage

\end{document}